\renewcommand{\@seccntformat}[1]{{\csname the#1\endcsname}{\normalsize.}\hspace{.5em}}
\numberwithin{equation}{section}
\newtheorem{thm}{Theorem}[section]
\newtheorem{cla}{Claim}
\newtheorem*{thm*}{Theorem}
\newtheorem*{lem*}{Lemma}
\newtheorem*{prop*}{Proposition}
\newcounter{cases}
\newcounter{subcases}[cases]
\newcommand\ex{\ensuremath{\mathrm{ex}}}
\newcommand\cB{{\mathcal B}}
\newcommand\cC{{\mathcal C}}
\newcommand\cE{{\mathcal E}}
\newcommand\cF{{\mathcal F}}
\newcommand\cH{{\mathcal H}}
\newcommand\cM{{\mathcal M}}
\begin{document}
\title{The Tur\'{a}n number of Berge matchings}

\author[1]{\small\bf Yichen Wang\thanks{E-mail: wangyich22@mails.tsinghua.edu.cn}}
\author[2,3]{\small\bf Zixuan Yang\thanks{E-mail: yangzixuan@nwpu.edu.cn}}
\author[1]{\small\bf Xiamiao Zhao\thanks{E-mail: zxm23@mails.tsinghua.edu.cn}}
\author[2,3]{\small\bf Yuhang Bai\thanks{E-mail: yhbai@mail.nwpu.edu.cn}}
\author[4,5]{\small\bf Junpeng Zhou\thanks{\textit{Corresponding author}. E-mail: junpengzhou@shu.edu.cn}}

\affil[1]{\small Department of Mathematical Sciences, Tsinghua University, Beijing 100084, P.R. China.}
\affil[2]{\small School of Mathematics and Statistics, Northwestern Polytechnical University, Xi'an 710129, Shaanxi, P.R. China.}
\affil[3]{\small Xi'an-Budapest Joint Research Center for Combinatorics, Xi'an 710129, Shaanxi, P.R. China.}
\affil[4]{\small Department of Mathematics, Shanghai University, Shanghai 200444, P.R. China.}
\affil[5]{\small Newtouch Center for Mathematics of Shanghai University, Shanghai 200444, P.R. China.}

\date{}

\maketitle

\begin{abstract}
    Given a graph $F$, an $r$-uniform hypergraph $\mathcal{H}$ is a {\em Berge-$F$} if there is a bijection $\phi:E(F)\to E(\mathcal{H})$ such that $e\subseteq \phi(e)$ for each $e\in E(F)$. Given a family $\mathcal{F}$ of $r$-uniform hypergraphs, an $r$-uniform hypergraph is $\mathcal{F}$-free if it does not contain any member of $\mathcal{F}$ as a subhypergraph. The Tur\'{a}n number of $\mathcal{F}$ is the maximum number of hyperedges in an $\mathcal{F}$-free $r$-graph on $n$ vertices. Let $M_{s+1}$ denote a matching of size $s+1$, i.e., the graph consisting of $s+1$ independent edges. Khormali and Palmer [\textit{European J. Combin.} 102 (2022) 103506] completely determined the Tur\'{a}n number of Berge matchings for sufficiently large $n$. Subsequently, Kang, Ni, and Shan [\textit{Discrete Math.} 345 (2022) 112901] determined the exact value of the Tur\'{a}n number of Berge-$M_{s+1}$ for all $n$ when $r \le s-1$ or $r \ge 2s+2$. In this paper, we settle the final open case $s \le r \le 2s+1$, thereby completing the determination of the Tur\'{a}n number of Berge matchings.
\end{abstract}

{\noindent{\bf Keywords}: Tur\'{a}n number, Berge hypergraph, matching}

{\noindent{\bf AMS subject classifications:} 05C35, 05C65}

\section{\normalsize Introduction}



A \textit{hypergraph} $\cH$, denoted by $\cH=(V(\cH),E(\cH))$, on a finite vertex set $V(\cH)$ is a family $E(\mathcal{H})$ of subsets of $V(\cH)$, called \textit{hyperedges}.
For an integer $r\geq2$, a hypergraph $\cH$ is called an \textit{$r$-uniform hypergraph} ($r$-graph for short) if every hyperedge of $\cH$ contains exactly $r$ vertices. 
The size of $E(\cH)$ is denoted by $e(\cH)$. The \textit{degree} of a vertex $v$ in $\cH$, denoted $d_{\cH}(v)$, is the number of hyperedges of $\cH$ that contain $v$.
When there is no ambiguity regarding $\cH$, we simply write $d(v)$ instead of $d_\cH(v)$.
Throughout this paper, 
we assume that all $r$-graphs are simple, i.e., no loops and no multiple edges (hyperedges).

Given a family $\cF$ of $r$-graphs, we say $\cH$ is \textit{$\cF$-free} if $\cH$ does not contain any member of $\cF$ as a subhypergraph. The \textit{Tur\'{a}n number} ${\rm{ex}}_r(n,\cF)$ of $\cF$ is the maximum number of hyperedges in an $\cF$-free $r$-graph on $n$ vertices. When $r=2$, we write ${\rm{ex}}(n,\cF)$ instead of ${\rm{ex}}_2(n,\cF)$.
Tur\'{a}n problems on graphs and hypergraphs are central topics in extremal combinatorics. A classical result in extremal graph theory is the Erd\H{o}s-Gallai theorem \cite{gallai1959maximal}, which determines the exact Tur\'{a}n number of matchings. 

It is natural to investigate the Tur\'{a}n problem for matchings in hypergraphs. For a graph $F$ and an $r$-graph $\cF$, we say $\cF$ is a \textit{Berge copy} of $F$~(a Berge-$F$ for short) if $V(F) \subseteq V(\cF)$ and there is a bijection $\phi: E(F) \rightarrow E(\cF)$ such that $e \subseteq \phi(e)$ for each $e \in E(F)$. The graph $F$ is called a \textit{core} of $\cF$. Observe that for a fixed graph $F$ there are many hypergraphs that are Berge copies of $F$. For convenience, we refer to this collection of hypergraphs as $\mathcal{B}F$. 
In 1989, Berge~\cite{berge1984hypergraphs} introduced the notion of a \textit{Berge cycle}. Gy\H{o}ri, Katona and Lemons~\cite{GYORI2016238} defined the notion of \textit{Berge paths} and generalized the Erd\H{o}s-Gallai theorem to Berge paths in 2016. Later, Gerbner and Palmer \cite{gerbner2017extremal} generalized the established notions of Berge cycle and Berge path to general graphs.
Tur\'{a}n problems on Berge hypergraphs have been extensively studied, yielding numerous related results.
For a short survey on Tur\'{a}n problems on Berge hypergraphs, one can refer to Subsection 5.2.2 in~\cite{gerbner2018extremal}. 
In particular, Khormali and Palmer~\cite{KHORMALI2022103506} completely determined the Tur\'{a}n number of Berge matchings for sufficiently large $n$.
Let $M_{s+1}$ denote a matching of size $s+1$, i.e., the graph consisting of $s+1$ independent edges. 

\begin{thm}[Khormali and Palmer~\cite{KHORMALI2022103506}]\label{thm: Khormali Palmer}
Fix integers $s \geq 1$ and $r \geq 2$. Then for $n$ large enough,
\begin{eqnarray*}
{\rm{ex}}_r(n,\mathcal{B}M_{s+1})=
\begin{cases}
s , & {\rm{if}}\ r \geq 2s + 1; \\
\binom{2s+1}{r}, & {\rm{if}}\ s+1 < r < 2s + 1; \\
n - s , & {\rm{if}}\ r = s+1; \\
\binom{s}{r-1} (n - s ) + \binom{s}{r}, & {\rm{if}}\ r \leq s.
\end{cases}
\end{eqnarray*}
\end{thm}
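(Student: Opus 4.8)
The plan is to translate containment of a Berge matching into a bipartite matching question, and then combine a Hall-type criterion with three explicit extremal constructions. First I would record the reformulation that an $r$-graph $\cH$ contains a member of $\mathcal{B}M_{s+1}$ if and only if there exist $s+1$ distinct hyperedges $h_1,\dots,h_{s+1}$ and $2(s+1)$ distinct vertices $a_i,b_i$ with $\{a_i,b_i\}\subseteq h_i$. Building the bipartite graph between hyperedges (each split into two ``slots'') and vertices, the defect form of Hall's theorem shows that a set $\mathcal{E}$ of $s+1$ hyperedges carries such a configuration precisely when $|\bigcup_{h\in\mathcal{E}'}h|\ge 2|\mathcal{E}'|$ for every $\mathcal{E}'\subseteq\mathcal{E}$. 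Thus $\cH$ is $\mathcal{B}M_{s+1}$-free exactly when every $s+1$ hyperedges admit a deficient subfamily $\mathcal{E}'$ with $|\bigcup_{h\in\mathcal{E}'}h|\le 2|\mathcal{E}'|-1$. This criterion is the engine for both directions.

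For the lower bounds I would exhibit the three natural constructions, one per regime, and check $\mathcal{B}M_{s+1}$-freeness by a short disjointness count: (i) any $s$ hyperedges (for $r\ge 2s+1$), since a Berge-$M_{s+1}$ needs $s+1$ distinct hyperedges; (ii) all $r$-subsets of a fixed $(2s+1)$-set (for $s+1<r<2s+1$), since every Berge copy then lives on at most $2s+1$ vertices while $M_{s+1}$ needs $2s+2$; and (iii) all $r$-sets meeting a fixed $s$-set $S$ in at least $r-1$ vertices (for $r\le s+1$), since each pair $\{a_i,b_i\}$ must then use a distinct vertex of $S$, forcing $s+1$ distinct vertices into $|S|=s$. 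Counting gives the claimed values $s$, $\binom{2s+1}{r}$, and $\binom{s}{r-1}(n-s)+\binom{s}{r}$ respectively.

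The upper bounds are the heart of the matter, and I would begin from a maximum Berge matching $(h_i,e_i)_{i=1}^t$ with $t\le s$, writing $W=\bigcup_i e_i$ (so $|W|=2t$) and $U=V\setminus W$. Maximality forces every hyperedge other than $h_1,\dots,h_t$ to meet $U$ in at most one vertex, i.e.\ to have at least $r-1$ vertices in $W$. When $r\ge 2s+1$ this closes immediately: any $s+1$ distinct hyperedges of size $\ge 2s+1$ satisfy Hall (distinct sets of size $\ge 2s+1$ have union $\ge 2s+2$, and smaller subfamilies are dominated by a single edge), so $\mathcal{B}M_{s+1}$-freeness forces $e(\cH)\le s$.

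The difficulty is that for $r\le 2s$ this maximality bound is off by a constant factor: it only produces a cover of size $|W|=2t$ up to $2s$, yielding a spurious linear term $\binom{2t}{r-1}(n-2t)$ instead of the correct $\binom{s}{r-1}(n-s)$ (and a spurious growing term in the clique regime, where the true answer is constant). The main obstacle, and where the bulk of the work and the hypothesis that $n$ is large enter, is a refined reassignment argument: if too many hyperedges attach a fresh outside vertex to an $(r-1)$-subset of $W$, then, since $n$ is large these outside vertices may be taken distinct, one reassigns the pairs inside $W$ to free room and produces a Berge matching of size $t+1$, a contradiction. Pushing this through reduces the effective cover to size $s$ and, in the range $s+2\le r\le 2s$, forces all edges onto $2s+1$ vertices, giving the constant bound $\binom{2s+1}{r}$. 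I would organize the remaining analysis by the three cases $r\le s$, $r=s+1$, and $s+2\le r\le 2s$, since the extremal structure (a size-$s$ cover versus a $(2s+1)$-clique) differs between them.
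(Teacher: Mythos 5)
The statement you were asked to prove is quoted by the paper from Khormali--Palmer and is not reproved there; the relevant internal comparison is the paper's Section~3 proof of Theorem~2.1, which strengthens this statement to an exact result for all $n\ge 2s+2$ when $s\le r\le 2s+1$. Several pieces of your proposal are sound and complete: the Hall reformulation is correct (splitting each hyperedge into two slots, the binding constraints are exactly the subfamilies taking both slots, giving the criterion $|\bigcup_{h\in\mathcal{E}'}h|\ge 2|\mathcal{E}'|$); all three lower-bound constructions and their freeness arguments are correct; and your $r\ge 2s+1$ upper bound is genuinely finished and clean, since any $k\ge 2$ distinct $r$-sets have union at least $r+1\ge 2s+2\ge 2k$, so any $s+1$ hyperedges satisfy Hall. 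This Hall packaging is a mild novelty relative to the paper, which argues by direct augmentation throughout, but you only actually exploit it in the easy regime.

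The genuine gap is that for $r\le 2s$ — the heart of the theorem — your entire upper bound is the sentence ``a refined reassignment argument\ldots pushing this through reduces the effective cover to size $s$,'' which is precisely where the difficulty lives, and as described it does not deliver the claimed conclusions. First, even granting the augmentation lemma you gesture at (popular $(r-1)$-subsets of $W$ cannot use both endpoints $u_i,v_i$ of a core pair, since two crossing hyperedges with distinct outside vertices would replace the pair $u_iv_i$ by two pairs and extend the matching — this is exactly the mechanism behind the paper's sets $\Omega$ and $Y$), you only obtain $e(\mathcal{H})\le\binom{s}{r-1}n+O(1)$: unpopular $(r-1)$-sets and hyperedges lying inside $W$ contribute unspecified constants, whereas the theorem asserts the exact values $\binom{s}{r-1}(n-s)+\binom{s}{r}$ and $n-s$. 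Closing that $O(1)$ slack — showing every hyperedge, including the matching hyperedges $e_i$ themselves and those inside the core, meets the transversal $\Omega$ in at least $r-1$ vertices, and excluding the exceptional configurations — is what occupies the paper's Cases~1.1, 1.2, 2.1, 2.2 and its Claims~3.2--3.4. Second, for $s+2\le r\le 2s$ you assert the argument ``forces all edges onto $2s+1$ vertices'' without a mechanism; the actual mechanism (the paper's Claim~3.1 together with its Case~(ii)) is that two outside vertices $w_1,w_2$ of positive degree in $\mathcal{H}'$ have core-neighborhoods of size at least $r-1\ge s+1$, which forces $\overline{N_{\mathcal{H}'}(w_1)}\cap N_{\mathcal{H}'}(w_2)\neq\emptyset$ and hence an augmentation, so at most one outside vertex attaches and $e(\mathcal{H})\le\binom{2s+1}{r}$. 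Nothing in your sketch rules out, say, two outside vertices each lying in a single crossing hyperedge. In short: right skeleton and correct easy cases, but the decisive structural analysis is named rather than carried out.
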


Note that if $n \le 2s+1$, it is trivial that ${\rm{ex}}_r(n,\mathcal{B}M_{s+1}) = \binom{n}{r}$. For $n\ge 2s+2$ and $r\geq3$, Kang, Ni and Shan \cite{KANG2022112901} determined the exact value of $\ex_r(n,\mathcal{B}M_{s+1})$ in the case when $r\leq s-1$ or $r\geq 2s+2$. However, the case $s\leq r\leq 2s+1$ remains open.

\begin{thm}[Kang, Ni and Shan~\cite{KANG2022112901}]\label{thm: Kang Ni Shan}
    Fix integers $s \ge 1$ and $r \ge 3$. For any $n \ge 2s+2$.
    \begin{eqnarray*}
    {\rm{ex}}_r(n,\mathcal{B}M_{s+1})=
    \begin{cases}
    \max\{ \binom{2s+1}{r}, \binom{s}{r-1} (n - s ) + \binom{s}{r}\}, & {\rm{if}}\ r \leq s - 1; \\
    s & {\rm{if}}\ r \ge 2s+2.
    \end{cases}
    \end{eqnarray*}
\end{thm}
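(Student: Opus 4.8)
The plan is to split on the size of $r$, since the two regimes are governed by different obstructions. For $r\ge 2s+2$ the bound comes from a trivial counting phenomenon, whereas for $r\le s-1$ the theorem is a hypergraph analogue of the Erd\H{o}s--Gallai theorem for matchings --- indeed the $r=2$ instance is exactly $\mathrm{ex}(n,M_{s+1})=\max\{\binom{2s+1}{2},\binom{s}{2}+s(n-s)\}$ --- and I would prove it by lifting the maximum-matching/augmenting-path proof of Erd\H{o}s--Gallai to the Berge setting. The two terms in the formula are matched by the two natural extremal constructions, which I would present first as lower bounds: (i) taking all $r$-subsets of a fixed $(2s+1)$-set gives $\binom{2s+1}{r}$ edges and is $\mathcal{B}M_{s+1}$-free because $s+1$ disjoint pairs need $2s+2>2s+1$ vertices; and (ii) fixing $T$ with $|T|=s$ and taking every $r$-set that meets $T$ in at least $r-1$ vertices gives $\binom{s}{r-1}(n-s)+\binom{s}{r}$ edges, and is $\mathcal{B}M_{s+1}$-free because each such edge has at most one vertex off $T$, so any pair it contains hits $T$, and $s+1$ disjoint pairs would require $s+1$ distinct vertices of $T$.

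For the upper bound when $r\ge 2s+2$ I would show that any $s+1$ distinct hyperedges already span a Berge-$M_{s+1}$: list them as $h_1,\dots,h_{s+1}$ and greedily pick inside $h_i$ a pair avoiding the at most $2(i-1)\le 2s$ vertices used so far, which is possible since $|h_i|=r\ge 2s+2$ leaves at least two free vertices. The chosen pairs are disjoint and their host hyperedges are distinct, so they form a Berge-$M_{s+1}$; hence a $\mathcal{B}M_{s+1}$-free $r$-graph has at most $s$ edges, and conversely any $\le s$ edges are trivially $\mathcal{B}M_{s+1}$-free.

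The core of the argument is the upper bound for $r\le s-1$. I would fix a maximum Berge matching $M=\{e_1,\dots,e_t\}$ with $t\le s$, host hyperedges $h_1,\dots,h_t$, and $U=\bigcup_i e_i$, so $|U|=2t\le 2s$. Maximality gives the first structural fact at once: every hyperedge other than the $h_i$ meets $U$ in at least $r-1$ vertices, since two vertices outside $U$ would extend $M$. Counting against the size-$2t$ set $U$ is, however, too weak (it produces a coefficient $\binom{2t}{r-1}$ rather than $\binom{s}{r-1}$), and the crucial and most delicate step is to reduce the effective cover from $2t$ down to $s$. Here is where the Erd\H{o}s--Gallai argument must be generalized: given a hyperedge $W\cup\{x\}$ with $W\in\binom{U}{r-1}$ and $x\notin U$, if $W$ contains an endpoint $a_i$ of some $e_i=\{a_i,b_i\}$ whose host $h_i$ also reaches a second free vertex $z\notin U\cup\{x\}$, then replacing $e_i$ by the pairs $\{x,a_i\}$ and $\{z,b_i\}$ (hosted by $W\cup\{x\}$ and $h_i$) enlarges $M$, a contradiction. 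Bookkeeping these forbidden attachments should show that only about $s$ vertices of $U$ can carry the $\Theta(n)$ edges, leaving either the linear bound $\binom{s}{r-1}(n-s)+\binom{s}{r}$ or, when the matching vertices together with the few vertices reached by the $h_i$ confine all edges to a $(2s+1)$-set, the term $\binom{2s+1}{r}$.

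I expect the augmenting/charging step to be the main obstacle, for two reasons. First, converting the local augmentation above into the clean global statement that at most $s$ vertices support all the linearly-many edges requires a careful case analysis of how the $(r-1)$-subsets of $U$ attach to the outside, while simultaneously tracking the $t$ host hyperedges and the extra vertices they reach. Second, the presence of the maximum of two terms means the two regimes must be reconciled: I would organize the whole argument as an induction on $n$ with base case $n=2s+1$, where $\binom{n}{r}=\binom{2s+1}{r}$ trivially bounds everything, deleting a vertex of degree at most $\binom{s}{r-1}$ when one exists and otherwise using the same augmenting analysis to show that a large minimum degree forces a Berge-$M_{s+1}$ once $n\ge 2s+2$; the transition point, where neither term strictly dominates, is the delicate spot in the bookkeeping. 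Throughout, the hypothesis $r\le s-1$ is what guarantees the edges are small enough for the two-vertex augmentations to always have room --- precisely the feature that fails in the range $s\le r\le 2s+1$ treated later in the paper.
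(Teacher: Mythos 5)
First, a point of reference: the paper does not prove this statement at all --- it is quoted verbatim from Kang, Ni and Shan \cite{KANG2022112901}, and the paper's own proofs (Section 3) only treat the complementary range $s\le r\le 2s+1$. So there is no in-paper proof to match your attempt against; your proposal has to stand on its own, and it does not yet. The parts you carry out fully are correct: both extremal constructions and their $\mathcal{B}M_{s+1}$-freeness arguments are right, and your greedy argument for $r\ge 2s+2$ (pick a fresh pair inside each of $s+1$ hyperedges, avoiding the at most $2s$ vertices already used, which is possible since $r-2s\ge 2$) is a complete and correct proof that ${\rm ex}_r(n,\mathcal{B}M_{s+1})=s$ in that range. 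The initial structural observation for $r\le s-1$ --- every hyperedge outside a maximum Berge matching meets the core $U$ in at least $r-1$ vertices --- is also correct, and is the same first step the present paper uses in its proof of Theorem 2.1.

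The genuine gap is the entire second half of the $r\le s-1$ upper bound, which you explicitly leave as ``bookkeeping these forbidden attachments \emph{should} show that only about $s$ vertices of $U$ can carry the $\Theta(n)$ edges.'' That reduction from the trivial coefficient $\binom{2t}{r-1}$ down to $\binom{s}{r-1}$ is precisely the hard content of the theorem, and your single augmentation (rerouting $e_i$ through $W\cup\{x\}$ and a free vertex $z$ of the host $h_i$) is not strong enough to deliver it: the host $h_i$ need not reach any vertex outside $U\cup\{x\}$ (its $r-2$ extra vertices may all lie in $U$), so the swap is conditional and by itself forbids nothing. The lemma you actually need is the two-hyperedge analogue of the Erd\H{o}s--Gallai step --- if distinct non-matching hyperedges $f\ni\{x,u_i\}$ and $g\ni\{y,v_i\}$ with distinct $x,y\notin U$ attach to \emph{both} endpoints of a matching pair, one can augment, so at most one endpoint per pair supports outside attachments, giving at most $t\le s$ high-degree vertices --- and even granting that, an exact result for \emph{all} $n\ge 2s+2$ (not just large $n$, which is Theorem 1.4) still requires handling $t<s$, the edges inside $U$, the at most $s$ deleted hosts, and the regime where everything collapses into a $(2s+1)$-set to produce the $\binom{2s+1}{r}$ term; your proposed induction on $n$ is named but not executed at the transition point where neither term dominates. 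As written, the proposal is a plausible plan whose decisive steps are asserted rather than proved.
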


The purpose of this paper is to solve the remaining case of the Tur\'{a}n number of Berge matchings as stated in Theorem~\ref{thm: exact berge}. 
For non-negative integers $a$ and $b$, let $\binom{a}{b}$ denote the binomial coefficient. In particular, we define $\binom{a}{b}=0$ if $a<b$. 

\begin{thm}\label{thm: exact berge}
    Let integers $s \ge 1$ and $r \ge 2$. For all $n \ge 2s+2$,
    \begin{eqnarray*}
    {\rm{ex}}_r(n,\mathcal{B}M_{s+1})=
    \begin{cases}
    \max\left\{ \binom{2s+1}{r}, \binom{s}{r-1} (n - s ) + \binom{s}{r}\right\}, & {\rm{if}}\ r \leq s + 1; \\
    \binom{2s+1}{r} , & {\rm{if}}\ s + 2 \leq r \leq 2s; \\
    s, & {\rm{if}}\ r \ge 2s+1.
    \end{cases}
    \end{eqnarray*}
\end{thm}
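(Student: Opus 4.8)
The plan is to prove the lower and upper bounds separately, organizing the upper bound around a Hall-type criterion for the existence of a Berge matching. For the lower bound I would exhibit three $\mathcal{B}M_{s+1}$-free constructions, one realizing each quantity. First, taking all $r$-subsets of a fixed set of $2s+1$ vertices gives $\binom{2s+1}{r}$ hyperedges, and this is Berge-$M_{s+1}$-free because the $s+1$ pairwise disjoint cores of any Berge-$M_{s+1}$ would require $2s+2$ distinct vertices, while every hyperedge lies inside the same $2s+1$ vertices. Second, fixing a set $S$ of $s$ vertices and taking every $r$-set meeting $S$ in at least $r-1$ vertices gives $\binom{s}{r-1}(n-s)+\binom{s}{r}$ hyperedges; here every hyperedge has at most one vertex outside $S$, so each core pair of a hypothetical Berge-$M_{s+1}$ must use a vertex of $S$, forcing $s+1$ distinct vertices into the $s$-set $S$, a contradiction. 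Third, any $s$ hyperedges are trivially Berge-$M_{s+1}$-free. Matching these against the three regimes (noting $\binom{s}{r-1}=\binom{s}{r}=0$ once $r\ge s+2$, which is exactly why the linear term disappears there) yields the stated lower bounds.

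For the upper bound, the key observation is that a Berge-$M_{k}$ is precisely a choice of $k$ distinct hyperedges together with two private vertices from each, all $2k$ of them distinct. Applying Hall's theorem to two copies of each hyperedge, a family $\cE$ of hyperedges admits such an assignment if and only if $|\bigcup_{h\in\cE'}h|\ge 2|\cE'|$ for every $\cE'\subseteq\cE$; hence $\cH$ is $\mathcal{B}M_{s+1}$-free exactly when no $(s+1)$-subfamily satisfies this demand-two Hall condition. For $r\ge 2s+1$ this finishes the proof immediately: given any $s+1$ distinct hyperedges, every subfamily of size $j\le s$ already has union of size at least $r\ge 2s+1>2j$, while the whole family has union of size at least $2s+2$ since distinct $r$-sets with $r\ge 2s+1$ have union at least $r+1\ge 2s+2$. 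Thus every $(s+1)$-subfamily is demand-two Hall, and $e(\cH)\le s$.

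For the remaining range $s\le r\le 2s$ I would fix a maximum Berge matching $\cM$ of size $t\le s$ and let $X$ be its set of $2t$ core vertices. A one-step augmentation shows every hyperedge outside $\cM$ has at most one vertex in $V(\cH)\setminus X$, that is, at least $r-1$ vertices in $X$. When $r\ge s+2$ one has $r-1\ge s+1>t$, so by pigeonhole such a hyperedge must contain both endpoints of some core pair; the target is then the confinement statement that no hyperedge can simultaneously contain a full core pair and a vertex outside $X$, which forces all non-matching hyperedges into $X$ and yields $e(\cH)\le\binom{2s}{r}+t\le\binom{2s+1}{r}$ (using $s\le 2s\le\binom{2s}{r-1}$). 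When $r\in\{s,s+1\}$ the linear term survives, and the task is to replace the crude hub $X$ of size $2t$ by an effective hub of size $s$: I would argue that one may select a single representative from each core pair so that every non-matching hyperedge still meets the chosen $s$-set in at least $r-1$ vertices, giving $\binom{s}{r-1}(n-s)+\binom{s}{r}$, and then take the maximum with the confined count $\binom{2s+1}{r}$.

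The main obstacle is precisely this refinement in the range $s\le r\le 2s$: the naive count coming from the size-$2t$ core set overestimates the linear-in-$n$ coefficient as $\binom{2t}{r-1}$ rather than $\binom{s}{r-1}$, and the confinement statement for $r\ge s+2$ does not follow from a single augmentation, since a hyperedge may contain a full core pair together with an outside vertex without immediately enlarging $\cM$ (the swap that frees one pair-endpoint only confines the displaced matching edge rather than contradicting maximality outright). Overcoming this should require either a maximum Berge matching chosen to optimize a secondary parameter combined with an alternating-path swap argument, or an induction on $n$ that locates a vertex of degree at most $\binom{s}{r-1}$ to delete. I expect the bulk of the technical work, together with the case split distinguishing $r=s$, $r=s+1$, $s+2\le r\le 2s$, and the boundary $r=2s+1$, to be concentrated here.
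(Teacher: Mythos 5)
Your lower bounds are correct and match the paper's, and your Hall-type argument for $r\ge 2s+1$ is sound and in fact a nice uniform alternative: the paper instead cites Kang--Ni--Shan for $r\ge 2s+2$ and handles $r=2s+1$ by an ad hoc augmentation, whereas your demand-two Hall condition (every subfamily of $j\le s$ hyperedges has union $\ge r\ge 2j$, and any $s+1$ distinct $r$-sets with $r\ge 2s+1$ have union $\ge r+1\ge 2s+2$) disposes of the whole range at once. The problem is that in the central range $s\le r\le 2s$ --- the very cases this theorem was written to settle --- both of your intermediate targets are false as stated, not merely hard. For $s+2\le r\le 2s$, your confinement claim (no hyperedge contains a full core pair together with a vertex outside the core set $X$, hence $e(\cH)\le \binom{2s}{r}+t$) is contradicted by the extremal configuration itself: in the complete $r$-graph on $2s+1$ vertices, a maximum Berge matching has core $\cC$ of size $2s$ leaving one vertex $w$ outside, and many hyperedges contain a core pair together with $w$; moreover $\binom{2s}{r}+t<\binom{2s+1}{r}=\binom{2s}{r}+\binom{2s}{r-1}$, so the bound you would derive lies strictly below the true extremal number. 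The correct statement must tolerate one exceptional outside vertex, which is exactly what the paper's Claim 3.1 does: either $e(\cH)\le\binom{2s+1}{r}$, or two vertices $w_1,w_2$ outside $\cC$ have positive degree in $\cH'$; in the latter case, for $r\ge s+2$ each $N_{\cH'}(w_i)\subseteq\cC$ has size $\ge r-1\ge s+1$, so some pair $\{u_j,v_j\}$ is split between the two neighborhoods, and replacing $e_j$ by the two hyperedges through $w_1,u_j$ and $w_2,v_j$ yields a $\cB M_{s+1}$. Note also that the paper exploits edge-maximality of the extremal $\cH$ to assume a Berge matching of size exactly $s$, so it never needs your pigeonhole step via $t<r-1$.

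Your plan for $r\in\{s,s+1\}$ has the same defect: the proposed ``effective hub'' claim --- that one can pick one representative per core pair so that every non-matching hyperedge meets the resulting $s$-set in $\ge r-1$ vertices --- again fails in the clique example, where hyperedges inside $\cC$ hit the pairs arbitrarily and avoid any fixed transversal. What is true is a dichotomy: either $\cH$ is essentially confined to $2s+1$ vertices (giving $\binom{2s+1}{r}$) or it has the hub structure (giving $\binom{s}{r-1}(n-s)+\binom{s}{r}$), and the mechanism producing this dichotomy is precisely the bulk of the paper's proof: the set $Y=N_{\cH'}(V(\cH)\setminus(\cC\cup\{w\}))$ and the reflection $\overline{S}$ swapping $u_i\leftrightarrow v_i$, a vertex $w$ chosen to minimize $d_{\cH'}(w)$, the forcing argument showing $\overline{X}\cap\overline{Y}=\emptyset$ and hence $X=\overline{X}$, $Y=\overline{Y}$ with $|Y|=s$, and the four-way case analysis for $r=s$, together with the $\overline{N_{\cH'}(w_1)}\cap N_{\cH'}(w_2)$ analysis for $r=s+1$. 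You correctly flag this refinement as the main obstacle, but the remedies you suggest (a secondary optimization parameter with alternating-path swaps, or induction on $n$ via a low-degree vertex) are not the ones that work here and are left entirely unexecuted, so the proposal does not constitute a proof in the range $s\le r\le 2s$.
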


It is worth mentioning that the Tur\'{a}n number and the generalized Tur\'{a}n number of a matching together with another graph have been widely studied~(see~\cite{ALON2024223,Gerbner2024matching, 2023arXiv230105625M},\cite{Zhu}).

The $r$-expansion of a graph $F$ is a special member of $\mathcal{B}F$, defined by enlarging each edge of $F$ with a set of $r-2$ new vertices such that all these new sets are disjoint.
When considering the Tur\'{a}n number of the expansion of a matching, one obtains the famous Erd\H{o}s Matching Conjecture, which has been solved for sufficiently large $n$~\cite{FRANKL20131068}.
There are also many result on the Tur\'{a}n number of the expansion of a matching together with the expansion of another graph.
The reader may refer to~\cite{GERBNER2025104155, 2025arXiv250704579W, 2025arXiv251121096Y, 2025arXiv251117000C, zhou2026linear}.
The above results reveal the importance of matchings in extremal combinatorics.

\section{\normalsize Proof}\label{sec: berge exact}

Given an $r$-graph $\cH$, the \textit{neighborhood} $N_\cH(v)$ of a vertex $v$ in $\cH$ is the set of all vertices adjacent to $v$ in $\cH$. For a set $S\subseteq V(\cH)$, the \textit{neighborhood} of $S$ is defined as $N_\cH(S)=\bigcup_{v \in S} N_\cH(v)\setminus S$.
Recall that $d_{\cH}(v)$ denotes the \textit{degree} of a vertex $v$ in $\cH$.
Let $\cH[U]$ denote the subhypergraph of $\cH$ induced by $U\subseteq V(\cH)$.
We say that a Berge matching $\cM$ in $\cH$ is \textit{maximum} if $\cH$ contains no Berge matching with a larger size than $\cM$.
When we say that $\cH$ contains a $\mathcal{B}M_{s+1}$, we mean that $\cH$ contains a Berge copy of $M_{s+1}$.



Note that for the cases $r \le s-1$ and $r \ge 2s+2$ in Theorem \ref{thm: exact berge}, the result holds by Theorem~\ref{thm: Kang Ni Shan},
and the case  $r=2$ in Theorem \ref{thm: exact berge} follows from the
result of Erd\H{o}s and Gallai \cite{gallai1959maximal}. 
Thus, in the following, we may assume $s \le r \le 2s+1$ and $r \ge 3$.

\begin{proof}[\bf Proof of Theorem~\ref{thm: exact berge}] 
    Let us first consider the lower bound.
    For $r \le s+1$, we consider the following two $r$-graphs:
    one is an $r$-graph formed by the union of a complete $r$-graph of order \(2s+1\) and \(n-2s-1\) isolated vertices (i.e., vertices of degree 0); the other is an $n$-vertex $r$-graph where there exists a vertex subset $S$ of size $s$ such that every hyperedge intersects $S$ in at least $r-1$ vertices.
    For $s+2 \leq r \leq 2s$, the lower bound follows from the union of a complete $r$-graph of order \(2s+1\) and \(n-2s-1\) isolated vertices. For $r\geq2s+1$, the lower bound follows from any $r$-graph with $s$ hyperedges.
    It is easy to verify that these hypergraphs are $\mathcal{B}M_{s+1}$-free.

    In the following, we prove the upper bound.
    Let $\mathcal{H}$ be a $\cB M_{s+1}$-free $r$-graph on $n$ vertices with maximum number of hyperedges.
    Note that adding a new hyperedge to $\cH$ increases the size of the core of a maximum Berge matching by at most $1$.
    Thus, by the maximality of $e(\cH)$, we may assume that $\mathcal{H}$ contains a $\cB M_s$, denoted by $\cM$. Let $\cE=\{e_1,\dots,e_s\}$ be the hyperedges of $\cM$, with the core $\cC = \{u_1,v_1,\dots,u_s,v_s\}$ such that $\{u_i,v_i\}\subseteq e_i$ for each $i\in [s]$.
    For a subset $S \subseteq \cC$, we write $\overline{S} = \{u_i \mid v_i \in S\} \cup \{v_i \mid u_i \in S\}$.

    Let $\cH'$ be the $r$-graph obtained from $\cH$ by deleting $s$ hyperedges of $\cE$. 
    Let $E_{\cH'}(v)$ denote the set of hyperedges in $\cH'$ that contain $v$.
    Observe that each hyperedge of $\cH'$ intersects $\cC$ in at least $r-1$ vertices. Indeed, if there exists a hyperedge $e\in E(\cH')$ such that $e\setminus \cC=\{u,v\}$, then $\cE\cup \{e\}$ forms a $\cB M_{s+1}$ in $\cH$ with the core $\cC\cup\{u,v\}$, a contradiction.
    This implies that $N_{\cH'}(v) \subseteq \cC$ for any $v\in V(\cH)\setminus \cC$.

    \begin{cla}\label{claim: 2 isolated vtx}
        If $s\leq r\leq 2s$,
        then either $e(\cH)\leq \binom{2s+1}{r}$,
        or there are at least two vertices $u, v\in V(\cH)\setminus \cC$ such that $d_{\cH'}(u) \geq 1$ and $d_{\cH'}(v) \geq 1$.
    \end{cla}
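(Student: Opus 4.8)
The plan is to argue the contrapositive: assume that at most one vertex of $V(\cH)\setminus\cC$ has positive degree in $\cH'$, and deduce $e(\cH)\le\binom{2s+1}{r}$. I would split into two cases according to the number of such vertices.

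First, if no vertex of $V(\cH)\setminus\cC$ has positive $\cH'$-degree, then every hyperedge of $\cH'$ is contained in $\cC$, so $e(\cH')\le\binom{2s}{r}$ and $e(\cH)=e(\cH')+s\le\binom{2s}{r}+s$. Since $r\ge 3$ forces $2\le r-1\le 2s-1$, the binomial $\binom{2s}{r-1}$ is minimized at the endpoints of this range, giving $\binom{2s}{r-1}\ge\binom{2s}{2s-1}=2s\ge s$. Hence $e(\cH)\le\binom{2s}{r}+\binom{2s}{r-1}=\binom{2s+1}{r}$, as desired.

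The substantial case is when exactly one vertex $w\in V(\cH)\setminus\cC$ satisfies $d_{\cH'}(w)\ge 1$. Put $X=\cC\cup\{w\}$, so $|X|=2s+1$; every hyperedge of $\cH'$ lies in $X$, and consequently the only hyperedges of $\cH$ that leave $X$ are matching edges $e_i\in\cE$ having a vertex in $Z:=V(\cH)\setminus X$. Writing $e(\cH[X])=\binom{2s+1}{r}-m$ and letting $t$ be the number of matching edges that leave $X$, we have $e(\cH)=e(\cH[X])+t=\binom{2s+1}{r}-m+t$, so it suffices to prove $t\le m$. I would prove this by showing that $t\ge m+1$ produces a Berge-$M_{s+1}$, contradicting $\mathcal{B}M_{s+1}$-freeness.

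The mechanism is an augmentation/rematching argument. Fix an $\cH'$-edge $f=\{w\}\cup A$ through $w$, with $A\subseteq\cC$ and $|A|=r-1$. For a single leaving edge $e_i$ with outside vertex $z$, if $A\cap\{u_i,v_i\}\ne\emptyset$---say $v_i\in A$---then reassigning $e_i\mapsto\{u_i,z\}$ and $f\mapsto\{v_i,w\}$ while keeping every other $e_j\mapsto\{u_j,v_j\}$ yields a Berge-$M_{s+1}$; note that when $r=2s$ the condition $A\cap\{u_i,v_i\}\ne\emptyset$ is automatic, since $|A|=2s-1>|\cC\setminus\{u_i,v_i\}|$. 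In general I would route all $t$ leaving edges simultaneously, each using its pair together with a distinct outside vertex and thereby freeing one core vertex, and then complete to a Berge-$M_{s+1}$ by selecting $s+1-t$ further hyperedges inside $\cH[X]$ that cover the remaining vertices of $X$ while avoiding the freed core vertices; the hypothesis $t\ge m+1$ is exactly what should make the required inside hyperedges available through a Hall/deficiency count against the $m$ absent hyperedges of $\cH[X]$.

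I expect the main obstacle to be this last rematching step when the (possibly unique) edge $f$ through $w$ misses the pair $\{u_i,v_i\}$ of a leaving matching edge: a single swap no longer works, and one must instead find a Berge-$M_s$ inside $\cH[X]$ whose core avoids a prescribed endpoint of each leaving pair, and then balance this against the deficiency $m$. Handling leaving edges that share an outside vertex of $Z$, and making the Hall-type condition precise, is the delicate part of the argument.
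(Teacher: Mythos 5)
Your reduction is sound and genuinely different from the paper's: every $\cH'$-hyperedge lies in $X=\cC\cup\{w\}$ (any other outside vertex would have positive $\cH'$-degree), so $e(\cH)=\binom{2s+1}{r}-m+t$ and the claim is equivalent to $t\le m$; your Case 1 count is fine, and your single swap $e_i\mapsto\{u_i,z\}$, $f\mapsto\{v_i,w\}$ is valid and settles $r=2s$, mirroring the paper's observation that the unique positive-degree vertex has no $\cH'$-neighbor in $\{u_s,v_s\}$. The genuine gap is the final step for $s\le r<2s$: the plan to ``route all $t$ leaving edges simultaneously'' and complete via a ``Hall/deficiency count'' is only announced, not carried out, and you concede as much. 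Worse, as stated it fails at the first move: distinct leaving edges $e_i,e_j$ may share their single outside vertex $z$, so the $t$ proposed core edges $\{u_i,z_i\}$ need not be disjoint, and no precise Hall-type condition relating $t$ to $m$ is ever formulated, let alone verified. Since the entire range $s\le r<2s$ rests on this step, the proof as written is incomplete.

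The irony is that your own single swap already closes the gap, making the simultaneous routing unnecessary. Fix one leaving matching edge $e_i$ with outside vertex $z$. If some $h\in E(\cH')$ contains $\{v_i,w\}$, then $h\notin\cE$ and $e_j\mapsto\{u_j,v_j\}$ for $j\ne i$, $e_i\mapsto\{u_i,z\}$, $h\mapsto\{v_i,w\}$ is a Berge-$M_{s+1}$ with core in $\cC\cup\{z,w\}$. Hence every $r$-subset of $X$ containing $\{v_i,w\}$ is absent from $E(\cH)$ except possibly the at most $s-1$ matching edges $e_j\subseteq X$ (note $e_i\not\subseteq X$), so
\begin{equation*}
m\;\ge\;\binom{2s-1}{r-2}-(s-1)\;\ge\;(2s-1)-(s-1)\;=\;s\;\ge\;t,
\end{equation*}
using $1\le r-2\le 2s-2$ (from $3\le r\le 2s$) and $t\le s$. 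This proves $t\le m$ outright, with no Hall argument and no difficulty about shared outside vertices: one leaving edge alone forces at least $s$ missing hyperedges through the pair $\{v_i,w\}$. For comparison, the paper avoids the deficiency bookkeeping entirely: writing $v$ for your $w$, it fixes one leaving edge $e_s$, shows $u_s,v_s\notin N_{\cH'}(v)$ and that no hyperedge of $\cH'[\cC]$ meets both $\{u_s,v_s\}$ and $\overline{N_{\cH'}(v)}$, and then bounds $e(\cH)\le s+\binom{|N_{\cH'}(v)|}{r-1}+\binom{2s-|N_{\cH'}(v)|}{r}+\binom{2s-2}{r}\le\binom{2s+1}{r}$ by an explicit partition-and-count. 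Your route, once repaired as above, is arguably cleaner; but you must replace the sketched Hall step by the counting argument, since as submitted that step is a hole, not a proof.
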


    \begin{proof}[{\bf{Proof of Claim~\ref{claim: 2 isolated vtx}.}}]
    It is sufficient to prove that if there is at most one vertex $u\in V(\cH)\setminus \cC$ with $d_{\cH'}(u) \geq 1$, then $e(\cH)\leq \binom{2s+1}{r}$.
    If $d_{\cH'}(v)=0$ for every $v\in V(\cH)\setminus \cC$, then $e(\cH) \leq s+\binom{2s}{r}\leq \binom{2s+1}{r}$ as $r\leq 2s$.

    Let $v\in V(\cH)\setminus \cC$ be the unique vertex with $d_{\cH'}(v)\geq 1$. If $e_i \subseteq \cC \cup \{v\}$ for every $i\in [s]$, then $e(\cH) \leq \binom{2s+1}{r}$ and we are done. Now suppose that there exists some $i\in [s]$ such that $e_i \nsubseteq \cC \cup \{v\}$.
    Without loss of generality, suppose $e_s$ contains a vertex $w \notin \cC \cup \{v\}$. Note that $|\cC|=2s$. If $r=2s$, then we have $u_s\in N_{\cH'}(v)$ or $v_s\in N_{\cH'}(v)$. Thus, there exists a $\cB M_{s+1}$ in $\cH$ by adding the hyperedge containing $v$ and $u_s$~(or $v_s$) to $\cE$, and extending the core to $\cC\cup \{v,w\}$, a contradiction. Now we assume that $s\leq r<2s$.
    We claim that $u_s\notin N_{\cH'}(v)$ and $v_s \notin N_{\cH'}(v)$.
    Indeed, if $u_s\in N_{\cH'}(v)$ (or $v_s\in N_{\cH'}(v)$), then as in the case $r=2s$ we may find a $\cB M_{s+1}$ in $\cH$, a contradiction.

    Then $|E_{\cH'}(v)|\leq \binom{|N_{\cH'}(v)|}{r-1}$.
    Note that $E(\cH'[\cC])=E(\cH')\setminus E_{\cH'}(v)$.
    We now partition $E(\cH'[\cC])$ into the following three classes:
    $\cE_1$ (consisting of hyperedges not containing $\overline{N_{\cH'}(v)}$),
    $\cE_2$ (consisting of hyperedges not containing $\{u_s,v_s\}$),
    and $\cE_3$ (consisting of hyperedges containing both a vertex of $\{u_s,v_s\}$ and a vertex of $\overline{N_{\cH'}(v)}$).
    We assert that $\cE_3=\emptyset$. Otherwise, assume that $e\in \cE_3$ contains the vertex $u_s$ and $u_i\in \overline{N_{\cH'}(v)}$ (or $v_i\in \overline{N_{\cH'}(v)}$) for some $i\in [s-1]$.
    By the definition of $\overline{N_{\cH'}(v)}$, there is a hyperedge $e'\in E(\cH')$ containing $v$ and $v_i$ (or $u_i$). Then $(\cE\setminus \{e_i\})\cup \{e',e\}$ forms a $\cB M_{s+1}$ in $\cH$ with the core $\cC\cup \{v,w\}$, a contradiction.
    Note that $r-1\leq|N_{\cH'}(v)|\leq 2s-2$. Therefore, we have
     \begin{equation*}
    \begin{aligned}
        e(\cH)= s+|E_{\cH'}(v)|+|\cE_1|+|\cE_2| &\leq s + \binom{|N_{\cH'}(v)|}{r-1} + \binom{2s-|N_{{\cH'}}(v)|}{r} + \binom{2s-2}{r}
        \le \binom{2s+1}{r}.
    \end{aligned}
     \end{equation*}
     \end{proof}


    \subsection{\normalsize Proof of Theorem~\ref{thm: exact berge} for $r=s$}

    \begin{cla}\label{claim: outdegree at least 2}
        If $d_{\cH'}(v)\leq 1$ for all $v\in V(\cH)\setminus \cC$,  then $e(\cH)\leq \max\left\{\binom{2s+1}{s},s(n-s)+1\right\}$.
    \end{cla}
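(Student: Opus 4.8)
The plan is to bound the hyperedges of $\cH'$ directly, exploiting that in the case $r=s$ we have $|\cC|=2s$ and every hyperedge of $\cH'$ has at most one vertex outside $\cC$. First I would split $E(\cH')$ into two classes: the \emph{internal} hyperedges, which are contained in $\cC$, and the \emph{external} hyperedges, which meet $V(\cH)\setminus\cC$. Since every hyperedge of $\cH'$ meets $\cC$ in at least $r-1=s-1$ vertices and has size $r=s$, this dichotomy is exhaustive and every external hyperedge has exactly one vertex outside $\cC$.

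The two classes are then counted separately. The internal hyperedges are $s$-subsets of the $2s$-set $\cC$, so there are at most $\binom{2s}{s}$ of them. For the external hyperedges, the hypothesis $d_{\cH'}(v)\le 1$ guarantees that each of the $n-2s$ vertices of $V(\cH)\setminus\cC$ lies in at most one hyperedge of $\cH'$; since each external hyperedge has a unique outside vertex, the external hyperedges inject into $V(\cH)\setminus\cC$, giving at most $n-2s$ of them. Adding back the $s$ deleted hyperedges of $\cE$ yields
\[
e(\cH)=e(\cH')+s\le \binom{2s}{s}+(n-2s)+s=\binom{2s}{s}+n-s.
\]

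It then remains to verify the elementary inequality $\binom{2s}{s}+n-s\le \max\{\binom{2s+1}{s},\,s(n-s)+1\}$ for all $n\ge 2s+2$ (recall $s=r\ge 3$). I would compare the two relevant linear functions of $n$: the crude bound $n\mapsto\binom{2s}{s}+n-s$ (slope $1$) and $n\mapsto s(n-s)+1$ (slope $s$). They cross at $n^\ast=s+\tfrac{\binom{2s}{s}-1}{s-1}$, and for $n\ge n^\ast$ the second term of the maximum already dominates. For $n<n^\ast$ I would instead bound against the constant $\binom{2s+1}{s}=\binom{2s}{s}+\binom{2s}{s-1}$, which needs $n-s\le\binom{2s}{s-1}$; this is guaranteed once $n^\ast\le s+\binom{2s}{s-1}$, equivalently $\binom{2s}{s}-1\le(s-1)\binom{2s}{s-1}$. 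Using $\binom{2s}{s-1}=\tfrac{s}{s+1}\binom{2s}{s}$ this reduces to $1-\tfrac{1}{\binom{2s}{s}}\le\tfrac{s(s-1)}{s+1}$, which holds for every $s\ge 3$ since $\tfrac{s(s-1)}{s+1}\ge\tfrac32$.

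The counting itself is routine; the only delicate point is this last step, namely checking that the two terms of the maximum together dominate the crude bound $\binom{2s}{s}+n-s$ across the entire range $n\ge 2s+2$, the crossover near $n=s+\binom{2s}{s-1}$ being the tight spot. I do not expect to need the full strength of $\cB M_{s+1}$-freeness beyond what is already encoded in the structural facts $N_{\cH'}(v)\subseteq\cC$ and the degree hypothesis.
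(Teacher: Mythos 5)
Your proof is correct and takes essentially the same route as the paper: the identical decomposition into hyperedges inside $\cC$ (at most $\binom{2s}{s}$) and hyperedges with exactly one outside vertex (at most $n-2s$ by the degree hypothesis) gives $e(\cH)\le\binom{2s}{s}+n-s$, and your crossover analysis at $n^{\ast}=s+\frac{\binom{2s}{s}-1}{s-1}$ is a direct rephrasing of the paper's proof by contradiction, both reducing to comparing $\binom{2s}{s}-1$ with $(s-1)\binom{2s}{s-1}$. The only cosmetic difference is that the paper's contradiction argument works for $s\ge 2$ while your sufficient condition $1-\frac{1}{\binom{2s}{s}}\le\frac{s(s-1)}{s+1}$ needs $s\ge 3$, which is harmless since this claim is invoked only in the case $r=s\ge 3$.
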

    \begin{proof}[{\bf Proof of Claim \ref{claim: outdegree at least 2}.}]
    Note that $e(\cH'[\cC])\leq \binom{2s}{r}=\binom{2s}{s}$. Since $d_{\cH'}(v)\leq 1$ for all $v\in V(\cH)\setminus \cC$,
    \begin{align*}
        e(\cH) &= s+e(\cH'[\cC])+\sum_{v\in V(\cH)\setminus \cC}d_{\cH'}(v)\leq s+\binom{2s}{s}+n-2s=\binom{2s}{s}+n-s.
    \end{align*}
    If $e(\cH)>\max\left\{\binom{2s+1}{s},s(n-s)+1\right\}$, then we have $\binom{2s}{s}+n-s\geq \binom{2s+1}{s}+1$ and $\binom{2s}{s}+n-s\geq s(n-s)+2$. This implies that $\binom{2s}{s}\geq (s-1)(n-s)+2\geq (s-1)\big[\binom{2s}{s-1}+1\big]+2=(s-1)\binom{2s}{s-1}+s+1$, which is a contradiction when $s\geq2$.
    \end{proof}


    \smallskip
    \noindent\textbf{Case 1.}  There exists $w \in V(\cH)\setminus \cC$ such that $\{u_i,v_i\} \subseteq N_{\cH'}(w)$ for some $i\in [s]$.
    \smallskip
    \smallskip

    We choose such a vertex $w$ so that $d_{\cH'}(w)$ is minimal.
    Without loss of generality, assume that $\{u_1,v_1\} \subseteq N_{\cH'}(w)$. Then we have $\{u_1,v_1\} \subseteq e_w\in E_{\cH'}(w)$, or $u_1\in e_{w}^1$ and $v_1\in e_{w}^2$, where $e_{w}^1,e_{w}^2\in E_{\cH'}(w)$ and $e_{w}^1\neq e_{w}^2$.
        Let
        \[
        Y = N_{\cH'}\big(V(\cH)\setminus \left(\cC \cup\{w\}\right)\big).
        \]
    Clearly, $Y \subseteq \cC$. We claim that $u_1,v_1\notin Y$. Otherwise, there exists a $\cB M_{s+1}$ in $\cH$ with the core $\cC\cup\{w,z\}$, where $z\in V(\cH)\setminus \left(\cC \cup\{w\}\right)$ and $u_1\in N_{\cH'}(z)$ (or $v_1\in N_{\cH'}(z)$).

    By Claim~\ref{claim: 2 isolated vtx}, we may suppose that there exists at least one vertex in $V(\cH)\setminus \left(\cC \cup\{w\}\right)$ such that its degree in $\cH'$ is at least 1. Otherwise, $e(\cH)\leq \binom{2s+1}{r}$ and we are done.
    Thus, $|Y|\ge r-1=s-1$. 

    \smallskip
    \noindent\textbf{Case 1.1.} $|Y| = s-1$.
    \smallskip
    \smallskip

    It is easy to see that $d_{\cH'}(v)\le 1$ for any $v\in V(\cH)\setminus \{\cC\cup w\}$. By Claim~\ref{claim: outdegree at least 2}, we may suppose $d_{\cH'}(w) \ge 2$. Otherwise, $e(\cH)\leq \max\left\{\binom{2s+1}{s},s(n-s)+1\right\}$ and we are done.
    Let $\cE_{1}$ and $\cE_{2}$ denote the set of hyperedges in $\cH'$ containing a vertex in $V(\cH)\setminus (\cC \cup \{w\})$, and the vertex $w$, respectively. Note that $\cE_{2}=E_{\cH'}(w)$. Let $\Omega = \{v_1,\ldots,v_s\}$ and $\cE_{3}=E(\cH'[\cC])\backslash E(\cH'[\Omega])$. Then
  \begin{align*}
  E(\cH)= \cE\cup E(\cH'[\Omega])\cup \bigcup_{i=1}^{3}\cE_{i}.
  \end{align*}
Clearly, $e(\cH'[\Omega])\le 1$. Note that $d_{\cH'}(v) \le 1$ for any $v \in V(\cH) \setminus (\cC \cup \{w\})$.
So $|\cE_{1}|\le n-2s-1$.

    Recall that $u_1,v_1\notin Y$. We claim that  $\{u_i,v_i\}\nsubseteq Y$ for all $2\leq i\leq s$. Otherwise, there exists some $j$ such that $\{u_j,v_j\}\subseteq Y$. Then $V(\cH) \setminus (\cC \cup \{w\})$  contains a vertex $v$ such that $\{u_j,v_j\} \subseteq N_{\cH'}(v)$ and $d_{\cH'}(v)\le 1$,  which contradicts the minimality of $d_{\cH'}(w)$.
    Without loss of generality, we assume that $Y = \{v_2,\ldots,v_s\}$.
    We claim that $N_{\cH'}(w) \subseteq \Omega \cup \{u_1\}$. Indeed, if $u_i\in N_{\cH'}(w)$ for some $2\le i \le s$, then there exists a $\cB M_{s+1}$, obtained by adding to $\cE\setminus \{e_i\}$ the hyperedge containing $wu_i$ and the hyperedge containing $v_iw'$ for some $w' \in V(\mathcal{H}) \setminus (\cC \cup \{w\})$ by the definition of $Y$.
    So $|\cE_2|\le \binom{s+1}{s-1}=\binom{s+1}{2}$.

    Since $d_{\cH'}(w) \ge 2$, we have $|N_{\cH'}(w)| \ge r=s$.
    Recall that $\{u_1,v_1\} \subseteq N_{\cH'}(w)$.
    Without loss of generality, we may assume that $\{u_1,v_1,\ldots,v_{s-1}\} \subseteq N_{\cH'}(w)$.
    For any hyperedge $e \in E(\cH'[\cC])$ containing $u_1$, $e$ does not contain any $u_j$ for $2\leq j\leq s$. Otherwise, by the definition of $Y$, we may find a $\cB M_{s+1}$ in $\cH$ whose core has edge set $\{wv_1,u_1u_j,v_jw',u_2v_2,\dots,u_sv_s\}\backslash\{u_jv_j\}$.
    For any hyperedge $e \in E(\cH'[\cC])$ containing $u_i$ with $2\leq i\leq s$, $e$ does not contain any other $u_j$ ($1\leq j\leq s$ and $j\neq i$) or $v_1$. Otherwise, by the definition of $Y$, we may find a $\cB M_{s+1}$ in $\cH$ (see Figure~\ref{fig: case 1.1}).
    It means that $|\cE_3|\le \binom{s}{r-1}+s-1=2s-1$.

        \begin{figure}
            \centering
            \includegraphics[width=0.9\linewidth]{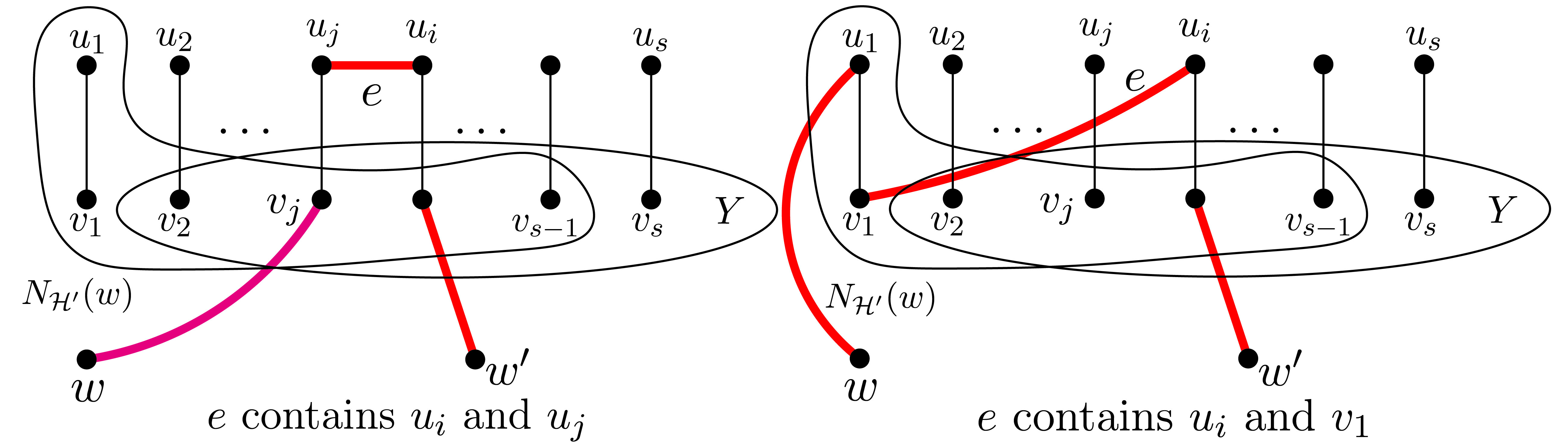}
            \caption{The illustration of $\cB M_{s+1}$ by the thick red lines. The left figure shows when a hyperedge $e \in \cE_3$ contains $u_i, u_j$~(the figure shows the case $2 \le i,j \le s-1$, the case when $i \in \{1,s\}$ holds similarly).
            The right figure shows when a hyperedge $e\in \cE_3$ contains $u_i, v_1$ ($2\leq i\leq s$).
            The hyperedge connecting $w'$ comes from the definition of $Y$.
            }
            \label{fig: case 1.1}
        \end{figure}

        Therefore, we have
        \begin{equation*}
        \begin{aligned}
            e(\cH)\leq   |\cE|+ e(\cH'[\Omega])+\sum_{i=1}^{3}|\cE_{i}| &\le s + 1+(n - 2s - 1) + \binom{s+1}{2} + 2s-1 \\
            & = n+s-1+\binom{s+1}{2} \\
            &\le \max\left\{ \binom{2s+1}{s}, s(n-s)+1 \right\},
        \end{aligned}
        \end{equation*}
        where the last inequality can be proved by a method similar to that of Claim \ref{claim: outdegree at least 2} when $s\geq3$.

    \smallskip
    \noindent\textbf{Case 1.2.} $|Y| \ge s$.
    \smallskip
    \smallskip

    Let $\cE_{1}$ and $\cE_{2}$ denote the set of  hyperedges in $\cH'$ containing the vertex $w$, and a vertex in $V(\cH)\setminus (\cC \cup \{w\})$, respectively. Let $\cE_{3}=E(\cH'[\cC])$.
    Then
    \begin{align*}
    E(\cH)= \cE\cup \bigcup_{i=1}^{3}\cE_{i}.
    \end{align*}

    Recall that $\{u_1,v_1\}\subseteq N_{\cH'}(w)$. 
    We claim that $e_1 \subseteq \cC\cup \{w\}$. Otherwise, we may find a $\cB M_{s+1}$ in $\cH$ with the core $\cC\cup\{w,z\}$, where $z\in e_1\setminus \left(\cC\cup \{w\}\right)$.
    So we can choose a set $X$ of $s$ vertices from $(e_1 \cup e_w) \cap \cC$ or $(e_1 \cup e_{w}^1 \cup e_{w}^2) \cap \cC$ such that  $\{u_1,v_1\}\subseteq X$.

    Recall that $u_1,v_1\notin Y$. It follows that $u_1,v_1\notin \overline{Y}$ and $|\overline{X} \cap \overline{Y}| \le s-2$.
    We claim that $\overline{X} \cap \overline{Y} = \emptyset$.
    Otherwise, assume that $u_j\in \overline{X} \cap \overline{Y}$ (or $v_j\in \overline{X} \cap \overline{Y}$)  for some $2 \le j \le s$.
    Then for the hyperedge $e_j \in \cE$, apart from $u_j$~(or $v_j$), it does not contain vertices of $V(\cH)\setminus \cC$, vertices in $\overline{X}$, and vertices in $\overline{Y}$. Otherwise, we may obtain a $\cB M_{s+1}$ in $\cH$~(see Figure~\ref{fig: Berge exact} for illustration), a contradiction.
    Thus, we have
        \begin{equation*}
        \begin{aligned}
            s- 1=|e_j| -1 \le |\cC \setminus (\overline{X} \cup \overline{Y})| = 2s-|X|-|Y| + |\overline{X} \cap \overline{Y}| \le s - |Y| + s - 2.
        \end{aligned}
        \end{equation*}
        It yields $|Y| \le s-1$, which contradicts $|Y|\ge s$. Thus, we have $\overline{X} \cap \overline{Y} = \emptyset$.

        \begin{figure}[t]
            \centering
            \includegraphics[width=0.9\textwidth]{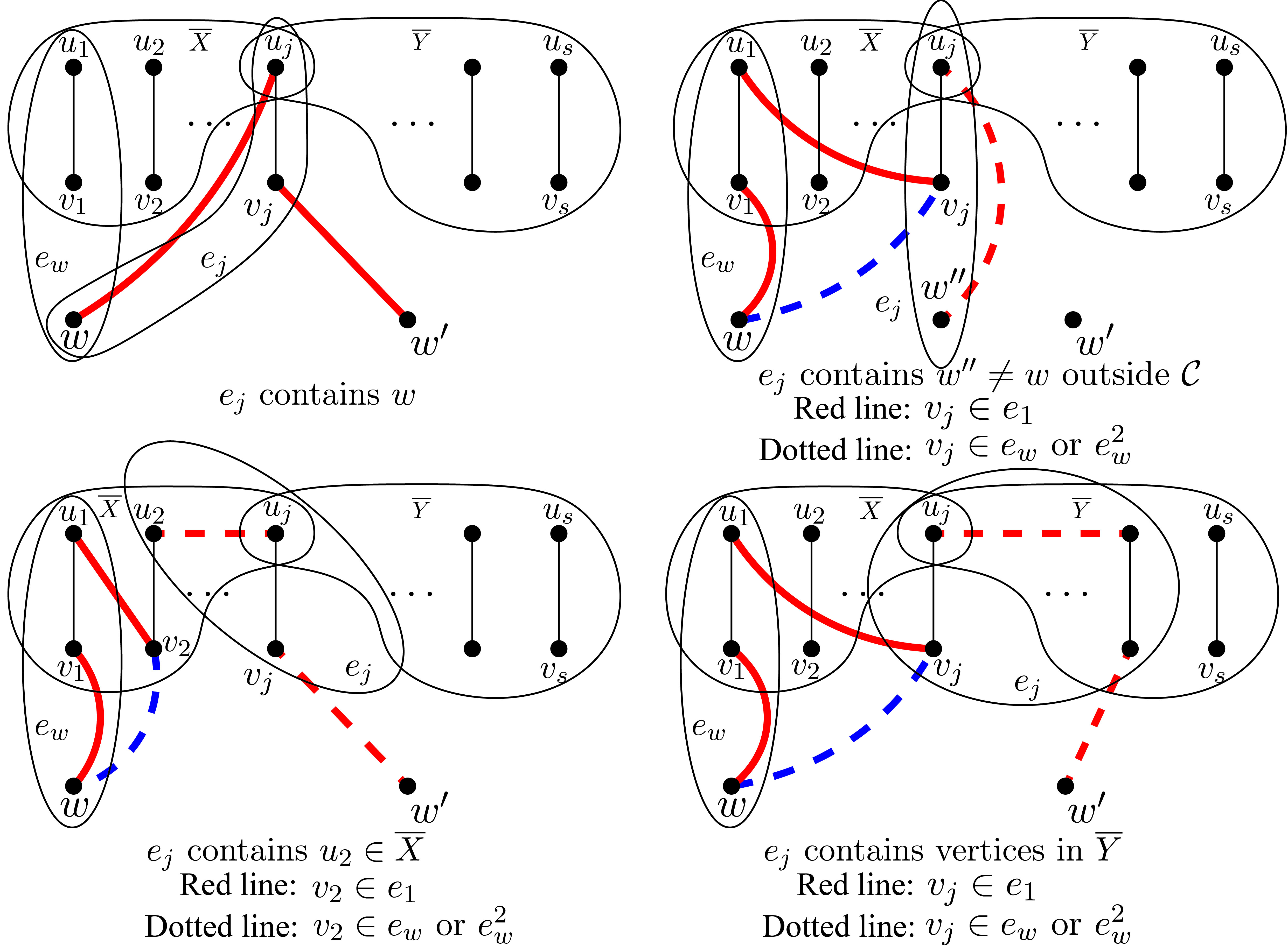}
            \caption{The four cases: $e_j$ contains $w$, $w'' \neq w$, a vertex in $\overline{X}$, or a vertex in $\overline{Y}$. In each case, we can find a $\cB M_{s+1}$ using either red lines or dotted lines.
            In the picture, we only list some typical cases. For example, when $e_j$ contains $u_2 \in \overline{X}$, we only consider $u_2 \in e_j\cap \overline{X}$.
            However, other cases hold similarly.}\label{fig: Berge exact}
        \end{figure}

        Combining $|X|=|\overline{X}| = s$, $|Y|=|\overline{Y}| \ge s$, $|\overline{X} \cup \overline{Y}| \le |\cC|= 2s$, and  $|\overline{X} \cap \overline{Y}| = 0$, we have $|Y| = |\overline{Y}| = s$ and $|X \cap Y |= 0$.
        We claim that $X \cap \overline{Y} = \emptyset$. Otherwise, there exists a $\cB M_{s+1}$ in $\cH$. So we have $X = \overline{X}$ and $Y = \overline{Y}$.
        Thus, all hyperedges of $\cE_1$ are contained in $\{w\} \cup X$ (otherwise, there exists a $\cB M_{s+1}$ in $\cH$ since $Y = \overline{Y}$).
        Then $|\cE_1|\leq \binom{|X|}{s-1}=\binom{s}{s-1}$.
        Furthermore, if $e\in\cE_3$ contains a vertex from $X$, then $e=X$, and if $e\in\cE_3$ contains a vertex from $Y$, then $e=Y$ (otherwise, there exists a $\cB M_{s+1}$ in $\cH$).
        Then $|\cE_3|\leq 2$.
        By the definition of $Y$, we have $|\cE_2|\leq \binom{|Y|}{s-1}(n-2s-1)=\binom{s}{s-1}(n-2s-1)$.
        Therefore,
        \begin{equation*}
        \begin{aligned}
            e(\cH) &= |\cE|+\sum_{i=1}^{3}|\cE_{i}|\\
            & \leq s + \binom{s}{s-1}  +\binom{s}{s-1}(n - 2s - 1) + 2 \\
            & = s(n - s) + 1 - (s^2 -s -1) \le s(n-s)+1.
        \end{aligned}
        \end{equation*}

    \smallskip
    \noindent\textbf{Case 2.} For any $v \in V(\cH)\setminus \cC$, $\{u_i,v_i\} \nsubseteq N_{\cH'}(v)$ for each $i\in [s]$.
    \smallskip
    \smallskip

    By Claim~\ref{claim: outdegree at least 2}, we may suppose $d_{\cH'}(w)\ge 2$ for some $w\in V(\cH)\setminus \cC$. Then we have that $N_{\cH'}(w) \subseteq \cC$ and $|N_{\cH'}(w)| \ge r=s$.
    Note that $\{u_i,v_i\}\nsubseteq N_{\cH'}(w)$ for every $i\in[s]$.
    Hence, without loss of generality, we may assume that $N_{\cH'}(w) = \Omega$.

    By Claim~\ref{claim: 2 isolated vtx}, we may suppose that there exists at least one vertex in $V(\cH)\setminus \left(\cC \cup\{w\}\right)$ such that its degree in $\cH'$ is at least 1. Otherwise, $e(\cH)\leq \binom{2s+1}{r}$ and we are done.
    It follows that $N_{\cH'}(w') \subseteq \Omega$ for any $w' \in V(\cH)\setminus \left(\cC \cup\{w\}\right)$. Otherwise, there exists a $\cB M_{s+1}$ in $\cH$ with the core $\cC\cup\{w,w'\}$.




    \smallskip
    \noindent\textbf{Case 2.1.}
    There exists a set $W \subseteq V(\cH)\setminus \cC$ such that each $v_i$ $(i\in [s])$ is contained in at least two members of the family $\{N_{\cH'}(v)\mid	v\in W\}$.
    \smallskip
    \smallskip

    It follows that for each $e \in E(\mathcal{H})$, the hyperedge $e$ can not contain both $u_i$ and $u_j$ for any $i \neq j$.
    Otherwise, we would get a $\cB M_{s+1}$ in $\cH$ by connecting $u_iu_j$ with the edge, and connecting $u_i,u_j$ to two different elements in $W$ by definition.
    Then every hyperedge $e \in E(\mathcal{H})$ intersects $\Omega$ in at least $s-1$ vertices.
    Therefore, we have
    \begin{equation*}
    \begin{aligned}
        e(\cH) & \leq 1 + \binom{s}{s-1}(n-s) = 1 +s(n-s).
    \end{aligned}
    \end{equation*}

    \smallskip
    \noindent\textbf{Case 2.2.} There does not exist $W \subseteq V(\cH)\setminus \cC$ such that each $v_i$ $(i\in [s])$ is contained in at least two members of the family $\{N_{\cH'}(v)\mid v\in W\}$. 
    \smallskip
    \smallskip


    Note that $N_{\cH'}(w) = \Omega$ and $N_{\cH'}(w') \subseteq \Omega$ for any $w' \in V(\cH)\setminus (\cC \cup \{w\})$. It follows that for every $w' \in V(\cH)\setminus (\cC \cup \{w\})$, we have $d_{\cH'}(w') \le 1$.
    Otherwise, there exists a set $W=\{w,w'\}$ that satisfies Case~2.1, a contradiction.
    So, by Claim \ref{claim: 2 isolated vtx}, there exists one vertex $w'\in V(\cH)\setminus \left(\cC \cup\{w\}\right)$ such that $d_{\cH'}(w')=1$.
    Similarly, if $w_1',w_2' \in V(\cH)\setminus (\cC \cup \{w\})$ satisfies $d_{\cH'}(w_1') = d_{\cH'}(w_2') = 1$,
    then $N_{\cH'}(w_1') = N_{\cH'}(w_2')$.
    Without loss of generality, assume that $N_{\cH'}(w') = \Omega\setminus \{v_1\}$ for all $w' \in V(\cH)\setminus (\cC \cup \{w\})$ with $d_{\cH'}(w') = 1$.
        Let $\cE_{1}$ and $\cE_{2}$ denote the set of  hyperedges in $\cH'$ containing the vertex $w$, and a vertex in $V(\cH)\setminus (\cC \cup \{w\})$, respectively. Let $\cE_{3}=E(\cH'[\cC])$.
Clearly, we have $|\cE_1| \le \binom{s}{s-1} = s $, $|\cE_2| \le n-2s-1$.

Now we consider the hyperedges in $\cE_{3}$.
If $e \in \cE_{3}$ contains $u_i$~($1 \le i \le s$), then it must lie in $\{u_i\}\cup \Omega$, at most $\binom{s}{s-1} = s$ such hyperedges. Otherwise, we would get a $\cB M_{s+1}$ in $\cH$ by connecting $u_iu_j$ with the edge, and connecting $u_i,u_j$ to $w,w'$.
With only one hyperedge on $\Omega$, we have $|\cE_3| \le s^2+1$.





    This implies that
    \begin{equation*}
    \begin{aligned}
        e(\cH) & \le |\cE|+|\cE_1| + |\cE_2| + |\cE_3|\\
        &\leq s + s + n-2s-1 +s^2+1 \\
               & = n + s^2  \le \max\left\{ \binom{2s+1}{s}, 1+s(n-s) \right\},
    \end{aligned}
    \end{equation*}
    where the last inequality can be proved by a method similar to that of Claim \ref{claim: outdegree at least 2} when $s\geq2$.




    \subsection{\normalsize Proof of Theorem~\ref{thm: exact berge} for $s+1 \le r \le 2s+1$}

    \smallskip
    \noindent\textbf{Case 1.} $r=s+1$.
    \smallskip
    \smallskip

    By Claim~\ref{claim: 2 isolated vtx}, if $e(\cH)\leq \binom{2s+1}{r}$, then the result holds.
    So we assume that $w_1,w_2$ are two vertices  such that $d_{\cH'}(w_i)\geq 1$ for $i=1,2$.
    Then $|N_{\cH'}(w_i)|\geq r-1=s$.
    And $\overline{N_{\cH'}(w_1)}\cap N_{\cH'}(w_2)=\emptyset$, otherwise we can find a Berge copy of $M_{s+1}$. Thus, $|N_{\cH'}(w_1)| = |N_{\cH'}(w_2)| = s$.

    For the case $N_{\cH'}(w_1)\cap N_{\cH'}(w_2)=\emptyset$, we write $N_{\cH'}(w_1)=V_1$ and $N_{\cH'}(w_2)=V_2$. Then for every $x\in V_1$ and $y\in V_2$, there is no hyperedge $e \in E(\cH)\setminus \cE$ with $\{x,y\}\subseteq e$, otherwise we can find a $\cB M_{s+1}$. Therefore, $E(\cH'[\cC])=\empty$ and for every vertex $v\in V(\cH)\setminus \cC$, either $N_{\cH'}(v) \subseteq V_1$ or $N_{\cH'}(v) \subseteq V_2 $. Thus, $e(\cH)\leq s+n-2s = n-s$.

    For the case $N_{\cH'}(w_1)\cap N_{\cH'}(w_2) \neq \emptyset$,  there exists $u_j$~(or $v_j$) in $\overline{N_{\cH'}(w_1)}\cap \overline{N_{\cH'}(w_2)}$. Now we consider the hyperedge $e_j$ in $\cE$.
    Apart from $u_j$~(or $v_j$), $e_j$ does not contain vertices in $V(\cH)\setminus \cC$,  $\overline{N_{\cH'}(w_1)}$, and  $\overline{N_{\cH'}(w_2)}$ by a similar argument as Figure~\ref{fig: Berge exact}.
    Then we have
    \begin{align*}
    s = |e_j| -1 \le  |\cC \setminus (\overline{N_{\cH'}(w_1)} \cup \overline{N_{\cH'}(w_2)})| \le |\overline{N_{\cH'}(w_1)} \cap \overline{N_{\cH'}(w_2)}|.
    \end{align*}
     It follows that $\overline{N_{\cH'}(w_1)} = \overline{N_{\cH'}(w_2)}$ and thus $N_{\cH'}(w_1) = N_{\cH'}(w_2)$.
    This only happens when $N_{\cH'}(w_1)$ contains exactly one vertex from $\{u_i,v_i\}$.
    Without loss of generality, we may assume $N_{\cH'}(w_1) = N_{\cH'}(w_2) = \Omega=\{v_1,\dots,v_s\}$.
    In this case, for every $w \in V(\cH) \setminus \cC$, either $N_{\cH'}(w) = \Omega$ or $d_{\cH'}(w) = 0$.
    In addition, every hyperedge containing $u_i$~(including $e_i$) contains $\Omega$.
    Thus, we have
    \[
        e(\cH)\leq e(\cH[\Omega])+\sum_{u\in V(\cH)\setminus\Omega }d_{\cH'}(u)\leq \binom{s}{s+1}+(n-s)=n-s.
    \]

    \smallskip
    \noindent\textbf{Case 2.} $s+2\leq r\leq 2s$.
    \smallskip
    \smallskip

    Suppose that the result does not hold. By Claim~\ref{claim: 2 isolated vtx}, we may assume $w_1,w_2\in V(\cH)\setminus \cC$ with $d_{\cH'}(w_i)\geq 1$ for $i=1,2$. Then $N_{\cH'}(w_i)\geq r-1\geq s+1$. It implies $\overline{N_{\cH'}(w_1)}\cap N_{\cH'}(w_2)\neq \emptyset$. By a similar argument, we can find a $\cB M_{s+1}$, a contradiction.

    \smallskip
    \noindent\textbf{Case 3.} $r = 2s+1$.
    \smallskip
    \smallskip

    Suppose that the result does not hold, i.e., $e(\cH)\geq s+1$. Then there is at least one hyperedge $e \in E(\cH)\setminus \cE$. Since $r = 2s+1$, $e$ must be of the form $\{w\} \cup \cC$ for a $w \in V(\mathcal{H}) \setminus \cC$.
    Then $e_1$ contains a vertex outside $\{w\} \cup \cC$, and thus we may find a $\cB M_{s+1}$, a contradiction.

\end{proof}

\bigskip

\textbf{Funding}:

The research of Wang is supported by the China Scholarship Council (No.~202506210200) and the National Natural Science Foundation of China (Grant 12571372).

The research of Yang is supported by the National Natural Science Foundation of China (Nos.~12401464 and 12471334).

The research of Zhao is supported by the China Scholarship Council (No.~202506210250) and the National Natural Science Foundation of China (Grant 12571372).

The research of Bai is supported by the National Natural Science Foundation of China (Nos. 12131013 and 12471334), the Shaanxi Fundamental Science Research Project for Mathematics and Physics (No. 22JSZ009) and the China Scholarship Council (No. 202406290002).

The research of Zhou is supported by the China Scholarship Council (No.~202406890088) and the National Natural Science Foundation of China (Nos.~12271337 and 12371347).

\bigskip

\noindent{\bf{Declaration of interest}}

The authors declare no known conflicts of interest.

\bibliography{ref.bib}
\bibliographystyle{wyc4}

\end{document}